\documentclass[12pt, a4paper]{amsart}
\usepackage{amsmath}
\usepackage{geometry,amsthm,graphics,tabularx,amssymb,shapepar}
\usepackage{amscd}
\usepackage[all]{xypic}

\newcommand{\Ad}{{\mathrm{Ad}}}

\theoremstyle{plain}
\newtheorem{theorem}{Theorem}[section]

\newtheorem{coro}[theorem]{Corollary}
\newtheorem{lemma}[theorem]{Lemma}
\newtheorem{ex}[theorem]{Example}

\newtheorem{rem}[theorem]{Remark}
\newtheorem{defi}[theorem]{Definition}

\newcommand{\ble}{\begin {lemma}}
\newcommand{\ele}{\end {lemma}}
\newcommand{\bprop}{\begin {Proposition}}
\newcommand{\eprop}{\end {Proposition}}
\newcommand{\bthm}{\begin {theorem}}
\newcommand{\ethm}{\end {theorem}}
\newcommand{\bco}{\begin {coro}}
\newcommand{\eco}{\end {coro}}
\newcommand{\bex}{\begin {ex}}
\newcommand{\eex}{\end {ex}}
\newcommand{\bre}{\begin {rem}}
\newcommand{\ere}{\end {rem}}

\newcommand{\be}{\begin {equation}}
\newcommand{\ee}{\end {equation}}
\newcommand{\bp}{\begin {proof}}
\newcommand{\ep}{\end {proof}}
\newcommand{\bee}{\begin {equation*}}
\newcommand{\eee}{\end {equation*}}
\newcommand{\rt}{\rightarrow}
\newcommand{\lb}{\label}

\begin{document}

\title{Real fundamental Chevalley involutions and conjugacy classes}


\author [G. Han] {Gang Han}
\address{School of Mathematics \\ Zhejiang University\\Hangzhou, 310027, China} \email{mathhgg@zju.edu.cn}

\author[B. Sun]{Binyong Sun}
\address{Institute for Advanced Study in Mathematics, Zhejiang University\\
  Hangzhou, 310058, China}\email{sunbinyong@zju.edu.cn}


\subjclass[2010]{20G20} \keywords{real reductive algebraic group, fundamental Chevalley involution}


\begin{abstract}

Let $\mathsf G$ be a connected reductive linear algebraic group defined over $\mathbb R$, and let $C: \mathsf G\rightarrow \mathsf G$ be a fundamental Chevalley involution. We show that for every $g\in \mathsf G(\mathbb R)$, $C(g)$ is conjugate to $g^{-1}$ in the group $\mathsf G(\mathbb R)$.  Similar result on the Lie algebras is also obtained.

\end{abstract}

 \maketitle

\section{Introduction}
 \setcounter{equation}{0}\setcounter{theorem}{0}

\newcommand{\G}{{\mathsf G}}
\newcommand{\F}{{\mathrm F}}
\newcommand{\R}{{\mathbb R}}
Let $\F$ be a field of characteristic zero, and let $\G$ be a  connected reductive linear algebraic group defined over $\F$. When  $\F$ is algebraically closed, or when $\G$ is a
general linear group, a unitary group, an orthogonal group, or a symplectic group, it is respectively shown in \cite[Proposition 2.6]{l} and
\cite[Proposition 4.I.2]{mvw} that there exists an involutive algebraic automorphism $\varphi: \G\rightarrow \G$ defined over $\F$ with the following remarkable property: for every $g \in \G(\mathrm{F})$,
\be
\lb{b1}
\text{$\varphi(g)$ and $g^{-1}$ are conjugate to each other in ${\G}(\mathrm{F})$.}
\ee
However, when $\F$ is a $p$-adic field, no such automorphism exists for some $\G$. For example, it was pointed out by D. Prasad (\cite{Pr15}, see also \cite[Example 2]{ad}) that no such automorphism exists when $\G$ is a split exceptional group of type $G_2$, $F_4$ or $E_8$, and it was 
 shown in \cite[Proposition 1.3]{lst} that no such automorphism exists when $\G$ is a quaternionic classical group of rank $\geq 5$. 
In this note, we will show that such an automorphism exists when $\F=\R$.

From now on we assume that $\F=\R$. Recall that a Cartan subgroup of $\G$ is said to be fundamental if its split rank is minimal among all Cartan subgroups of $\G$. Up to conjugation by $\G(\R)$, there exists a unique fundamental Cartan subgroup of $\G$ (see \cite[Proposition 6.61]{kn} for example).

\begin{defi}
An involutive algebraic automorphism $C: \G\rightarrow \G$ defined over $\R$ is  called a \textit{fundamental Chevalley involution} of $\G$ if  there exists a fundamental Cartan subgroup $\mathsf H$ of $\G$ such that $C(h)=h^{-1}$ for all $h\in \mathsf H(\mathbb{R})$.
\end{defi}

Let $C: \G\rightarrow \G$ be a fundamental Chevalley involution. The following result, which asserts the existence and uniqueness of fundamental Chevalley involutions, is proved by Jeffrey Adams in \cite[Theorem 1.2]{ad}.

 \begin{theorem} \lb{g} There exists a fundamental Chevalley involution $C$ of $\G$. If $C'$ is another fundamental Chevalley involution of $\G$, then
 \[
  C'=\Ad_g\circ C\circ \Ad_{g^{-1}}
 \]
 for some $g\in \G(\R)$, where $\Ad_g: \G\rightarrow \G$ denotes the conjugation by $g$.
  \end{theorem}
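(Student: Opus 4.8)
The plan is to work over $\mathbb{C}$ and to produce the involution from the interplay of three commuting structures on $\G_{\mathbb{C}}$: the complex conjugation $\sigma$ defining the real form $\G(\R)$, a Cartan involution $\theta$ commuting with $\sigma$, and the resulting compact conjugation $\sigma_0=\sigma\circ\theta$. A holomorphic algebraic automorphism is defined over $\R$ precisely when it commutes with $\sigma$, and since $\sigma=\sigma_0\circ\theta$ it suffices to build a holomorphic Chevalley involution $C$ that inverts the complexified fundamental Cartan $\mathsf H_{\mathbb{C}}$ and commutes with both $\theta$ and $\sigma_0$. I recall that the complex Chevalley involution attached to a Chevalley basis $\{H_i,E_\alpha\}$ acts by $H_i\mapsto -H_i$ and $E_\alpha\mapsto -E_{-\alpha}$, and that it factors as $C=\sigma_0\circ\sigma_s$, where $\sigma_0$ and $\sigma_s$ are the compact and split conjugations determined by the same basis; in particular $C$ automatically commutes with the compact conjugation attached to its own basis. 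The whole existence problem is therefore to choose the basis so that its compact conjugation is the prescribed $\sigma_0$ and so that $C$ commutes with $\theta$ as well.

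For existence I would fix a $\theta$-stable fundamental Cartan $\mathsf H=\mathsf T\mathsf A$ with $\mathsf T\subset\mathsf K$ compact and $\mathsf A$ split, and observe that $\mathfrak t_u=\mathfrak t_0\oplus i\mathfrak a_0$ is a maximal torus of the compact form $\mathfrak u_0$, since fundamentality says exactly that $\mathfrak t_0$ is maximal abelian in $\mathfrak k_0$. I would then build a Chevalley basis supported on $\mathsf H_{\mathbb{C}}$ whose compact conjugation is $\sigma_0$, and normalize the root vectors so that $\theta(E_\alpha)=c_\alpha E_{\theta\alpha}$ with $c_\alpha=c_{-\alpha}$ for every root $\alpha$. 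A direct check shows that this symmetry is exactly the condition for $C$ (which sends $E_\alpha\mapsto -E_{-\alpha}$) to commute with $\theta$. The place where fundamentality is indispensable, and the main obstacle in this half, is this normalization: the obstruction to arranging $c_\alpha=c_{-\alpha}$ is concentrated on the roots with $\theta\alpha=-\alpha$, i.e.\ the real roots, and a fundamental Cartan is characterized by having none. Granting the normalization, the resulting $C$ inverts $\mathsf H_{\mathbb{C}}$ and commutes with $\theta$ and $\sigma_0$, hence with $\sigma$, so it is a fundamental Chevalley involution.

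For uniqueness, let $C,C'$ be two fundamental Chevalley involutions, inverting fundamental Cartans $\mathsf H,\mathsf H'$. Using that all fundamental Cartans are $\G(\R)$-conjugate (quoted in the excerpt), I would replace $C'$ by $\Ad_{g_0}\circ C'\circ\Ad_{g_0^{-1}}$ for a suitable $g_0\in\G(\R)$ and thereby assume $\mathsf H=\mathsf H'$. Then $\psi:=C'\circ C$ fixes $\mathsf H(\R)$, hence $\mathsf H_{\mathbb{C}}$, pointwise, so it scales each root space and is therefore inner: $\psi=\Ad_t$ for some $t\in\mathsf H_{\mathbb{C}}$, with $\sigma(t)\equiv t$ modulo the center $Z$ because $\psi$ is defined over $\R$. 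Thus $C'=\Ad_t\circ C$, and this is automatically involutive since $C(t)=t^{-1}$. Finally, from the identity $\Ad_g\circ C\circ\Ad_{g^{-1}}=\Ad_{gC(g)^{-1}}\circ C$, the theorem reduces to the solvability of
\[
 gC(g)^{-1}\equiv t \pmod Z,\qquad g\in\G(\R).
\]

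This last solvability is where the real work lies. I would read it as the vanishing of an obstruction class: after twisting by $C$, the reality of $t$ becomes the cocycle condition, and I want $t$ to be a twisted coboundary $gC(g)^{-1}$, a statement in the Galois cohomology of $\mathrm{Gal}(\mathbb{C}/\R)$ with values in the fixed-point group $\G^C$. I would attack it by restricting to $\mathsf H(\R)$ together with representatives of the real Weyl group: for $g\in\mathsf H(\R)$ one has $gC(g)^{-1}=g^2$, so the question is whether the class of $t$ modulo $Z$ is reached by squares together with Weyl twists. Here the absence of real roots again controls the component groups of $\mathsf A(\R)$ and of the real Weyl group, and I expect either a finite case analysis or the vanishing of the relevant $H^1$ to close the argument; establishing this solvability cleanly, rather than the largely formal reductions preceding it, is the main obstacle of the proof.
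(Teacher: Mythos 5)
First, a point of reference: the paper does not prove this statement at all. It is Adams's theorem, quoted with the citation \cite[Theorem 1.2]{ad}, and everything in the paper takes it as input; so your proposal can only be measured against Adams's own argument, whose broad architecture (existence via a normalization of root vectors attached to a fundamental Cartan, uniqueness by moving both involutions to the same Cartan and reducing to a twisted-conjugacy equation) your sketch does follow. The purely formal steps you carry out are correct: the factorization $C=\sigma_0\circ\sigma_s$, the equivalence of $C\theta=\theta C$ with $c_\alpha=c_{-\alpha}$, the reduction of uniqueness to a common Cartan, the identity $\Ad_g\circ C\circ \Ad_{g^{-1}}=\Ad_{gC(g)^{-1}}\circ C$, and the reformulation of uniqueness as solvability of $gC(g)^{-1}\equiv t \pmod{Z}$ with $g\in\mathsf{G}(\mathbb{R})$. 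But both of the places you yourself label ``the main obstacle'' are precisely the mathematical content of the theorem, and neither is closed, so what you have is a correct reduction of the theorem to its two hard points, not a proof. Moreover, the existence half is not merely incomplete: the implicit mechanism is unavailable as stated. You cannot normalize the $E_\alpha$ root-by-root or $\theta$-orbit-by-orbit, because a rescaling $E_\alpha\mapsto z_\alpha E_\alpha$ keeps the map $H\mapsto -H$, $E_\alpha\mapsto -E_{-\alpha}$ an automorphism only when $\alpha\mapsto z_\alpha^2$ extends to a character of the root lattice (the structure constants transform by $z_\alpha z_\beta z_{\alpha+\beta}^{-1}$), while $\sigma_0$-adaptedness and $[E_\alpha,E_{-\alpha}]=H_\alpha$ force $z_{-\alpha}=\bar z_\alpha=z_\alpha^{-1}$. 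Hence the admissible renormalizations amount exactly to replacing $C$ by $\Ad_s\circ C$ with $s$ in the compact adjoint torus, and arranging $c_\alpha=c_{-\alpha}$ is a single global solvability question on that torus; solving it, and showing that the absence of real roots kills the obstruction, is the actual work, which you ``grant.''

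The uniqueness gap is equally real. Taking $g\in\mathsf{H}(\mathbb{R})$, so that $gC(g)^{-1}=g^2$, or exponentiating, only reaches the image of $\mathsf{H}(\mathbb{R})$ in $(\mathsf{H}/Z)(\mathbb{R})$, and this image can be a proper subgroup: for $\mathsf{G}=\mathrm{SU}^*(4)=\mathrm{SL}_2(\mathbb{H})$ the adjoint fundamental Cartan contains classes $[h]$ with $\sigma(h)=-h$ (for instance $h$ with quaternionic diagonal entries $(1,-1)$-type coordinates), which are real points of $\mathsf{H}/Z$ not arising from $\mathsf{H}(\mathbb{R})$ at all. To reach such classes one must genuinely leave the torus --- via real Weyl group representatives or a cohomological vanishing statement --- and that is exactly the part of Adams's proof that cannot be waved at; ``a finite case analysis or the vanishing of the relevant $H^1$'' is a placeholder for the theorem itself. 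So the proposal should be assessed as a plausible and well-oriented reduction with two genuine gaps, one in each half, each sitting at the crux.
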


Let $C: \G\rightarrow \G$ be a fundamental Chevalley involution.
In \cite[Theorem 1.2]{ad}, Adams also proves that
for every semisimple element $g \in \G(\R)$,
$C(g)$ and $g^{-1}$ are conjugate to each other in ${\G}(\R)$.
Following Adams (\cite{ad}), we will prove the following generalization of this result.

\begin{theorem}\label{main1}
For every element $g \in \G(\R)$,
$C(g)$ and $g^{-1}$ are conjugate to each other in ${\G}(\R)$.

\end{theorem}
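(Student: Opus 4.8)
The plan is to follow the strategy indicated by Adams and reduce the general statement to the case of unipotent elements, using the Jordan decomposition together with the semisimple case that is already known. Since the assertion ``$C(g)$ and $g^{-1}$ are conjugate in $\G(\R)$'' is invariant both under replacing $g$ by a $\G(\R)$-conjugate and under replacing $C$ by $\Ad_h\circ C\circ \Ad_{h^{-1}}$, there are two kinds of freedom to exploit: one may move $g$ within its conjugacy class and, by Theorem \ref{g}, one may replace $C$ by any other fundamental Chevalley involution. I would first write $g=g_s g_u=g_u g_s$ for the (real, rational) Jordan decomposition, with $g_s$ semisimple and $g_u$ unipotent. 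Since $C$ is an algebraic automorphism defined over $\R$, $C(g)=C(g_s)C(g_u)$ is the Jordan decomposition of $C(g)$, and likewise $g^{-1}=g_s^{-1}g_u^{-1}$; hence it is enough to match semisimple and unipotent parts simultaneously.

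By Adams's semisimple case there is $x\in\G(\R)$ with $\Ad_x(C(g_s))=g_s^{-1}$. Setting $D=\Ad_x\circ C$, one has $D(g_s)=g_s^{-1}$, so $D$ preserves $\Cent_\G(g_s)$ and its identity component $M=\Cent_\G(g_s)^\circ$, and $D^2|_M$ is inner. The elements $D(g)=g_s^{-1}D(g_u)$ and $g^{-1}=g_s^{-1}g_u^{-1}$ then have the same semisimple part $g_s^{-1}$, so by the standard fact that $s u_1$ and $s u_2$ (with $s$ semisimple and $u_i$ unipotent) are $\G(\R)$-conjugate if and only if $u_1$ and $u_2$ are conjugate in $\Cent_\G(s)(\R)$, the theorem for $g$ reduces to showing that $D(g_u)$ and $g_u^{-1}$ are conjugate in $M(\R)$ --- a statement internal to the reductive group $M$ about the unipotent element $g_u$. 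The key structural point I would establish is that, after correcting by an inner automorphism and choosing $C$ to commute with a Cartan involution $\theta$ (as one may by Adams's construction, which also makes $M$ $\theta$-stable), $D|_M$ is itself a fundamental Chevalley involution of $M$. Granting this, the problem is reduced to the unipotent case for the possibly smaller group $M$; when $g_s$ is central one has $M=\G$ and $g_u=g$, so the unipotent case must in any event be proved directly for every connected reductive group.

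For the unipotent case I would pass to the Lie algebra: writing $u=\exp N$ with $N\in\mathfrak g(\R)=\Lie(\G)(\R)$ nilpotent, one has $C(u)=\exp(dC(N))$ and $u^{-1}=\exp(-N)$, so it suffices to prove the infinitesimal statement $dC(N)\sim_{\G(\R)}-N$ for every nilpotent $N$ (this also yields the promised Lie algebra theorem, after the same Jordan reduction). Here I would invoke the Jacobson--Morozov theorem over $\R$ to embed $N$ in an $\mathfrak{sl}_2$-triple $(N,H,N')$, giving $\phi\colon \SL_2\to\G$ defined over $\R$. The model computation is that the fundamental Chevalley involution of $\SL_2(\R)$ is $\Ad_{\mathrm{diag}(1,-1)}$, which sends the upper nilpotent $n$ to $-n$; thus if $C$ can be aligned with the triple so that $dC$ restricts on $\phi(\mathfrak{sl}_2)$ to this involution, then $dC(N)=-N$ and we are done. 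More robustly, since $-N$ and $N'$ are $\phi(\SL_2(\R))$-conjugate (via the image of $\left(\begin{smallmatrix}0&1\\-1&0\end{smallmatrix}\right)$), it is enough to show that $dC$ carries the real nilpotent orbit of $N$ to that of $-N$.

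The step I expect to be the main obstacle is exactly this last point over $\R$. Over $\mathbb C$ the corresponding statement is known --- it is the algebraically closed case recalled in the introduction, which gives $dC(N)\sim -N$ after complexification --- so the entire difficulty is that a single complex nilpotent orbit typically splits into several real orbits, and one must verify that $dC$ interchanges the real orbit of $N$ with that of $-N$ rather than permuting the real forms in some other way. I would control this using the Kostant--Sekiguchi correspondence together with the compatibility of $C$ with the Cartan involution $\theta$: choosing the Jacobson--Morozov triple in a normal form adapted to $\theta$, one tracks the effect of $dC$ on the associated $K_{\mathbb C}$-orbit in $\mathfrak p_{\mathbb C}$ and matches it with negation. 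A secondary technical obstacle is the structural lemma of the second paragraph, namely that the restriction of a fundamental Chevalley involution to the centralizer $M$ is again a fundamental Chevalley involution; this requires comparing fundamental Cartan subgroups of $M$ and of $\G$, and is where the minimality of the split rank and the uniqueness part of Theorem \ref{g} enter.
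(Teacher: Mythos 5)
Your skeleton coincides with the paper's: Jordan decomposition $g=g_sg_u$, Adams's semisimple case, reduction to a unipotent statement inside the centralizer of $g_s$, and the unipotent case handled through $\exp$ and nilpotent elements of the Lie algebra. But the two steps you yourself flag as obstacles are precisely where the paper's content lies, and your proposal closes neither of them. The main gap is the nilpotent case: your plan (``choose the Jacobson--Morozov triple in a normal form adapted to $\theta$, track the effect of $dC$ on the associated $K_{\mathbb C}$-orbit in $\mathfrak p_{\mathbb C}$ and match it with negation'') is a statement of intent, not an argument; nothing you write supplies a mechanism ruling out that $dC$ permutes the several real forms of the complex orbit of $N$ in some way other than sending the orbit of $N$ to that of $-N$. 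The paper's mechanism is short and concrete: consider the map $\iota$ from real nilpotent orbits to real semisimple orbits sending the class of $E$ to the class of $E-F$, where $(E,[E,F],F)$ is an $\mathfrak{sl}_2$-triple; by Collingwood--McGovern (Theorems 9.2.3 and 9.4.6) this map is well defined and \emph{injective}. Both $C$ and $J\colon X\mapsto -X$ commute with $\iota$ (if $(E,H,F)$ is a triple, so are $(C(E),C(H),C(F))$ and $(-E,H,-F)$), and $C=J$ on semisimple orbits --- a Lie algebra statement obtained from Adams's group-level result by picking, in the relevant Cartan subgroup, an element whose cyclic group is Zariski dense and then differentiating. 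Injectivity of $\iota$ then forces $C=J$ on nilpotent orbits. Your Kostant--Sekiguchi idea lives in the same circle of results ($E\mapsto E-F$ is exactly such an invariant), but the decisive trick --- transporting the theorem through an injective map equivariant under both $C$ and $J$ into the semisimple world where the theorem is already known --- is absent.

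The secondary gap is your structural claim about $D|_M$. You choose $x$ using only the single equation $D(g_s)=g_s^{-1}$; that is too weak to compare $D|_M$ with a Chevalley involution of $M=\Cent_{\mathsf G}(g_s)^\circ$, and your proposed fix via $\theta$-compatibility is never developed. The paper instead aligns the correcting element with an \emph{entire} fundamental Cartan subgroup $\mathsf T$ of $M$: since $g_s$ is central in $M$ it lies in $\mathsf T(\mathbb R)$, and $\mathsf T(\mathbb R)$ contains an element $t$ generating a Zariski-dense cyclic subgroup of $\mathsf T(\mathbb C)$; applying the semisimple case to $t$ yields $g\in\mathsf G(\mathbb R)$ with $C(x)=\Ad_g(x^{-1})$ for \emph{all} $x\in\mathsf T(\mathbb R)$ at once. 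Then $(\Ad_{g^{-1}}\circ C)|_M$ composed with a suitably conjugated fundamental Chevalley involution $C'$ of $M$ fixes $\mathsf T(\mathbb R)$ pointwise, and Adams's Lemma 3.4 identifies such an automorphism as $\Ad_t$ for some $t\in\mathsf T(\mathbb R)$, giving $(\Ad_{g^{-1}}\circ C)|_M=\Ad_{k^{-1}}\circ C'$ with $k\in M(\mathbb R)$. Note that the paper never needs (and never claims) that the restriction is \emph{literally} a fundamental Chevalley involution of $M$ --- indeed $\Ad_x\circ C$ need not even be an involution, as you observe --- only that it agrees with one up to an inner twist by $M(\mathbb R)$, which is exactly what the reduction to the unipotent case requires. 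Without the full-Cartan alignment, Lemma 3.4 has no purchase, and no substitute argument is given.
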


\newcommand{\g}{{\mathfrak{g}}}

Let $\g$ denote the Lie algebra of $\G$. By differentiation, the  fundamental Chevalley involution $C:\G\rightarrow \G$  induces an involutive automorphism   $C:\g\rightarrow \g$.
We will also prove the following Lie algebra analogue of Theorem \ref{main1}.
\begin{theorem}\label{main2}
For every element $X \in \g$,
$C(X)$ and $-X$ are conjugate to each other by ${\G}(\R)$.

\end{theorem}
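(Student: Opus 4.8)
The plan is to reduce to the additive Jordan decomposition and treat the semisimple and nilpotent parts separately, using Theorem \ref{main1} as the main input. Write $X = X_s + X_n$ with $X_s$ semisimple, $X_n$ nilpotent and $[X_s,X_n]=0$; since $X$ is real this decomposition is defined over $\R$, and because $C$ is an automorphism, $C(X)=C(X_s)+C(X_n)$ is the Jordan decomposition of $C(X)$. As $C$ and every $\Ad_g$ preserve semisimplicity and nilpotency, matching Jordan parts will let me assemble the desired conjugacy $C(X)\sim -X$ out of conjugacies of the two parts.

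The nilpotent case is the cleanest and follows directly from the group theorem. Suppose $X$ is nilpotent. Then $u:=\exp(X)$ is a unipotent element of $\G(\R)$, and since the differential of the algebraic automorphism $C$ is the Lie algebra map $C$, we have $C(u)=\exp(C(X))$. By Theorem \ref{main1} there is $h\in\G(\R)$ with $h\,C(u)\,h^{-1}=u^{-1}$, that is $\exp(\Ad_h C(X))=\exp(-X)$. Both $\Ad_h C(X)$ and $-X$ are nilpotent, and in characteristic zero $\exp$ is a conjugation-equivariant bijection from the nilpotent cone of $\g$ onto the unipotent variety of $\G$ (with inverse $\log$); injectivity gives $\Ad_h C(X)=-X$, so $C(X)\sim_{\G(\R)}-X$.

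For the general case I would argue by induction on $\dim\G$, with the semisimple statement as the remaining input. That $C(X_s)$ is $\G(\R)$-conjugate to $-X_s$ is the Lie algebra analogue of the semisimple case of Theorem \ref{main1} established by Adams, and should follow by his method of transporting $X_s$ into a fundamental Cartan subalgebra, on which $C$ acts as $-1$. Granting it, choose $g_0\in\G(\R)$ with $\Ad_{g_0}C(X_s)=-X_s$ and set $\tau:=\Ad_{g_0}\circ C$, an automorphism of $\G$ defined over $\R$ with $\tau(X_s)=-X_s$. Then $\tau$ preserves $\mathsf M:=\mathrm{Z}_\G(X_s)$, a reductive group with $\mathfrak m=\g^{X_s}$ in which $X_s$ is central, and $\tau(X_n)$ is a nilpotent element of $\mathfrak m$. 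Because $X_s$ is central in $\mathfrak m$, it suffices to conjugate $\tau(X_n)$ to $-X_n$ inside $\mathsf M(\R)$. If $X_s$ is noncentral in $\g$ then $\dim\mathsf M<\dim\G$, and I finish by the nilpotent case applied within $\mathsf M$; if $X_s$ is central then $\mathsf M=\G$, but in that case $C(X_s)=-X_s$ automatically, since the center of $\g$ lies in the fundamental Cartan subalgebra, and again only the nilpotent case in $\G$ is required.

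The main obstacle is to legitimize this inductive step, namely to show that $\tau$ (or a further $\mathsf M(\R)$-conjugate of it) restricts to a \emph{fundamental Chevalley involution} of $\mathsf M$, so that Theorem \ref{main1} and the nilpotent case apply inside $\mathsf M$. Two points need care. First, $\tau$ need not be involutive: one computes $\tau^2=\Ad_{g_0 C(g_0)}$, so $\tau$ must be straightened to an involution by adjusting $g_0$ within its coset $\mathsf M(\R)g_0$, a cohomological normalization governed by the class of $g_0 C(g_0)$. Second, one must verify that the resulting involution inverts a fundamental Cartan subgroup of $\mathsf M$, using the compatibility between fundamental Cartan subgroups of $\mathsf M$ and of $\G$ together with the fact that $C$ inverts the fundamental Cartan of $\G$; the possible disconnectedness of $\mathsf M$ must also be accommodated by passing to $\mathsf M^\circ$ and tracking the component group in the conjugacy assertions. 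Checking these compatibilities is where Adams's structural analysis of fundamental Chevalley involutions is needed, and is the crux of the proof.
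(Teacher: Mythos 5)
Your skeleton is essentially the paper's own: Jordan decomposition $X=X_s+X_n$, a separate nilpotent case, and a reduction of the mixed case to the nilpotent case inside $\mathsf M=\Cent_\G(X_s)$. Your nilpotent-case argument is correct: granting Theorem \ref{main1} (whose proof in the paper does not depend on your argument, so there is no circularity), the $\G(\R)$-equivariant bijectivity of $\exp$ from the nilpotent cone onto the unipotent variety lets you descend $\exp(\Ad_h C(X))=\exp(-X)$ to $\Ad_h C(X)=-X$. (The paper goes the opposite way: it proves the Lie-algebra nilpotent case first, via the injection of nilpotent classes into semisimple classes $E\mapsto E-F$ coming from $\mathfrak{sl}_2$-triples, and then deduces the unipotent group case; your direction is legitimate since Theorem \ref{main1} is available.) The genuine gap is in the mixed case: the step you yourself call ``the main obstacle'' and ``the crux''---that $\tau=\Ad_{g_0}\circ C$ restricted to $\mathsf M$ agrees, up to inner automorphism by $\mathsf M^\circ(\R)$, with a fundamental Chevalley involution of $\mathsf M^\circ$---is precisely the mathematical content of the reduction, and you do not prove it. Without it, the nilpotent case applied within $\mathsf M$ only relates $C'(X_n)$ and $-X_n$ for $\mathsf M^\circ$'s own fundamental Chevalley involution $C'$, and says nothing about $\tau(X_n)$ versus $-X_n$; the proof as written does not close.

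The paper closes exactly this gap, and in a way that bypasses both of your ``points of care'': one never needs $\tau$ to be an involution (so no normalization of $g_0C(g_0)$ is required), only the weaker identity $(\Ad_{g^{-1}}\circ C)|_{\mathsf Z^\circ}=\Ad_{k^{-1}}\circ C'$ with $k\in\mathsf Z^\circ(\R)$. The trick is to choose $g$ adapted not merely to $X_s$ but to an entire fundamental Cartan subgroup $\mathsf T$ of $\mathsf Z^\circ$ (here $\mathsf Z^\circ$ is your $\mathsf M^\circ$): since $X_s$ is central in $\mathrm{Lie}(\mathsf Z^\circ)$, it lies in $\mathrm{Lie}(\mathsf T)$; by Lemma \ref{x} there is $t\in\mathsf T(\R)$ with $\langle t\rangle$ Zariski-dense in $\mathsf T(\mathbb C)$, and applying Adams's semisimple group result to this $t$ produces $g$ with $C(x)=\Ad_g(x^{-1})$ for \emph{all} $x\in\mathsf T(\R)$ by density. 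Then $\Ad_{g^{-1}}\circ C$ preserves $\mathsf Z^\circ$ and inverts $\mathsf T(\R)$, and its composite with a suitably conjugated $C'$ fixes $\mathsf T(\R)$ pointwise, hence equals $\Ad_t$ for some $t\in\mathsf T(\R)$ by \cite[Lemma 3.4]{ad}. That lemma, together with the Zariski-dense-generator trick, is the machinery your outline is missing. A smaller but real error: your proposed route to the semisimple Lie-algebra case---``transporting $X_s$ into a fundamental Cartan subalgebra, on which $C$ acts as $-1$''---cannot work, because a semisimple element need not lie in any fundamental Cartan subalgebra (a regular split element of $\mathfrak{sl}_2(\R)$ lies only in split Cartan subalgebras, while the fundamental one is compact); the correct derivation is again by density: place $X_s$ in some Cartan subalgebra $\mathfrak h$ with Cartan subgroup $\mathsf H$, take $t\in\mathsf H(\R)$ generating a Zariski-dense subgroup, apply the group case to $t$, and differentiate the resulting identity on $\mathsf H(\mathbb C)$.
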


\section{Semisimple elements}

We continue with the notation of the Introduction.

Recall that an element $g\in \G(\R)$ is said to be semisimple (or unipotent) if for every algebraic representation of $\G$ on every finite dimensional real vector space $V$, $g$ acts on $V$ by a semisimple (or unipotent) linear operator.
An element $X\in \g$ is said to be semisimple (or nilpotent) if for every aforementioned representation $V$, $X$ acts on $V$ by a semisimple (or nilpotent) operator.

As we already mentioned, the following lemma is proved in \cite[Theorem 1.2]{ad}.
\begin{lemma}\label{lemsg0}
Theorem \ref{main1} holds for all semisimple elements $g \in \G(\R)$.
\end{lemma}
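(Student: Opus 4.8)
The plan is to deduce the real statement from the corresponding statement over $\mathbb{C}$ together with the real (multiplicative) Jordan decomposition, organising the whole argument as an induction on $\dim\G$ with two base cases. The complex input is exactly the algebraically closed case recalled in the Introduction: the complexification of $C$ inverts the maximal torus $\mathsf H(\mathbb{C})$ and is an involution, hence is a Chevalley involution of $\G(\mathbb{C})$, and (all such involutions being conjugate) it satisfies \eqref{b1} over $\mathbb{C}$ by \cite{l}. Thus $C(x)$ is conjugate to $x^{-1}$ in $\G(\mathbb{C})$ for every $x$, and throughout it will suffice to promote a known $\G(\mathbb{C})$-conjugacy to a $\G(\R)$-conjugacy.

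Next I would set up the reduction. Write the real Jordan decomposition $g=g_e g_h=g_h g_e$ with $g_e$ elliptic, $g_h$ hyperbolic, both semisimple and in $\G(\R)$. Since $C$ is defined over $\R$ it preserves ellipticity and hyperbolicity, so $C(g)=C(g_e)\,C(g_h)$ and $g^{-1}=g_e^{-1}g_h^{-1}$ are the real Jordan decompositions of $C(g)$ and $g^{-1}$. By uniqueness and equivariance of this decomposition, producing $x\in\G(\R)$ with $x\,C(g)\,x^{-1}=g^{-1}$ amounts to producing a single $x$ that simultaneously conjugates $C(g_e)$ to $g_e^{-1}$ and $C(g_h)$ to $g_h^{-1}$. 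For the elliptic factor I would first conjugate $g_e$ into the fixed fundamental Cartan $\mathsf H$ defining $C$; this is legitimate because conjugating $g_e$ by $u\in\G(\R)$ replaces both $C(g_e)$ and $g_e^{-1}$ by $\G(\R)$-conjugates, and because every elliptic element is $\G(\R)$-conjugate into a maximal compact subgroup $K$, hence into a maximal torus of $K$, i.e.\ into the compact part of some fundamental Cartan, all of which are conjugate. Once $g_e\in\mathsf H(\R)$ we have $C(g_e)=g_e^{-1}$ on the nose.

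With $g_e\in\mathsf H$, set $\mathsf M=Z_{\G}(g_e)^{\circ}$, a connected reductive $\R$-subgroup. Then $C(\mathsf M)=Z_{\G}(C(g_e))^{\circ}=Z_{\G}(g_e^{-1})^{\circ}=\mathsf M$, so $C$ restricts to an involution of $\mathsf M$ over $\R$ inverting the Cartan $\mathsf H\subseteq\mathsf M$; since $g_e$ is elliptic, $\mathsf H$ is fundamental in $\mathsf M$, and hence $C|_{\mathsf M}$ is a fundamental Chevalley involution of $\mathsf M$. Both $C(g_h)$ and $g_h^{-1}$ centralise $g_e^{-1}=C(g_e)$, so they lie in $\mathsf M$, and any conjugator chosen inside $\mathsf M(\R)$ automatically fixes $g_e^{-1}$. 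Therefore, when $g_e$ is not central we conclude by induction on $\dim\G$ applied to the proper subgroup $\mathsf M$. This leaves the base cases in which $g_e$ is central: there $C(g_e)=g_e^{-1}$ and a conjugator of the hyperbolic parts automatically fixes the central element $g_e^{-1}$, so everything reduces to showing $C(g_h)\sim g_h^{-1}$ in $\G(\R)$ for a hyperbolic element $g_h$ (the purely elliptic case being already settled, since then $C(g)=g^{-1}$).

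The hyperbolic base case is where I expect the real difficulty to lie. For hyperbolic $g_h$ I must upgrade the $\G(\mathbb{C})$-conjugacy of $C(g_h)$ and $g_h^{-1}$ to a $\G(\R)$-conjugacy. Conjugating both into $\exp(\mathfrak a)$ for a maximal split subalgebra $\mathfrak a$, the claim becomes the assertion that two elements of $\mathfrak a$ lying in a common orbit of the complex Weyl group $W_{\mathbb{C}}$ already lie in a common orbit of the real (restricted) Weyl group $W_{\R}=N_{\G(\R)}(\mathsf A)/Z_{\G(\R)}(\mathsf A)$. This descent is the main obstacle: unlike the elliptic case it is not resolved by placing the element in a distinguished Cartan, and the naive centraliser obstruction lives in $H^{1}(\R,Z_{\G}(g_h))$, which is generally nonzero along the torus directions. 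The key point to establish is that for the maximally split Cartan the restricted Weyl group already realises every complex Weyl conjugacy between elements of $\mathfrak a$, equivalently that the map $\mathfrak a/W_{\R}\to\mathfrak t_{\mathbb{C}}/W_{\mathbb{C}}$ is injective; this forces the obstruction to be trivial and completes the argument.
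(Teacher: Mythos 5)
The paper gives no argument for this lemma: it is precisely the semisimple case of Adams's theorem and is simply cited from \cite[Theorem 1.2]{ad}. Your proposal is therefore an attempt to reprove Adams's result, and it breaks down at the elliptic step. You claim that an elliptic element of $\mathsf{G}(\mathbb{R})$ can be conjugated into a maximal torus of a maximal compact subgroup $K$, ``i.e.\ into the compact part of some fundamental Cartan''. The torus theorem you are invoking is valid only for \emph{connected} compact groups, and $K$ is in general disconnected even though the algebraic group $\mathsf{G}$ is connected. Concretely, let $\mathsf{G}=\PGL_2$ and let $g$ be the image in $\PGL_2(\mathbb{R})$ of $\mathrm{diag}(1,-1)$. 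Then $g$ has order two, so it is elliptic and equals its own elliptic part $g_e$; it lies in the maximal compact subgroup $\mathrm{O}(2)/\{\pm 1\}$, but in the non-identity component. It is not $\PGL_2(\mathbb{R})$-conjugate into the fundamental Cartan subgroup $\mathsf{H}$, whose real points are $\SO(2)/\{\pm 1\}$: a lift to $\GL_2(\mathbb{R})$ of any $\PGL_2(\mathbb{R})$-conjugate of $g$ has determinant $-\mu^{2}<0$, while a lift of an element of $\mathsf{H}(\mathbb{R})$ is $\lambda$ times a rotation matrix and has determinant $\lambda^{2}>0$. So the step ``once $g_e\in\mathsf{H}(\mathbb{R})$ we have $C(g_e)=g_e^{-1}$ on the nose'' is unreachable in general, and the induction built on it (the subgroup $\mathsf{M}=Z_{\mathsf{G}}(g_e)^{\circ}$ containing $\mathsf{H}$, etc.) collapses. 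The correct statement --- the one the paper itself proves and uses in Section 4 (Lemma \ref{exg0}) --- is that a semisimple element lies in a fundamental Cartan subgroup of the identity component of its \emph{own centralizer}, not of $\mathsf{G}$; this is exactly why Adams's argument, and the paper's treatment of general elements, works inside $Z_{\mathsf{G}}(s)^{\circ}$ rather than with a fixed fundamental Cartan of $\mathsf{G}$.

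Second, the hyperbolic base case is not proved: you explicitly identify as ``the key point to establish'' the assertion that two elements of $\mathfrak{a}$ in one orbit of the complex Weyl group already lie in one orbit of the restricted Weyl group, and then you stop. That assertion is in fact true, but it is a genuine theorem requiring its own argument (for instance via the Borel--Tits fact that parabolic subgroups defined over $\mathbb{R}$ which are conjugate over $\mathbb{C}$ are conjugate by an element of $\mathsf{G}(\mathbb{R})$, applied to the parabolic subgroups attached to the hyperbolic elements, together with rational conjugacy of Levi components), and no such argument appears in the proposal. So even setting aside the elliptic error, the proposal is incomplete exactly at its self-declared crux; as written it neither matches the paper (a citation of \cite[Theorem 1.2]{ad}) nor constitutes an independent proof of the lemma.
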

\newcommand{\T}{{\mathsf T}}

The following  lemma is well-known.
 \ble \lb{x}Let $\mathsf T$ be an algebraic torus defined over $\mathbb{R}$. Then there is some $t\in\mathsf T(\mathbb{R})$ such that the cyclic group $\langle t\rangle $ generated by $t$ is Zariski-dense in $\mathsf T(\mathbb{C})$.

 \ele
 \bp
We sketch a proof for the convenience of the reader. Note that there are only countably many proper algebraic subgroups of $\mathsf T$ that are defined over $\R$. Thus the set
\[
  \bigcup_{\mathsf S\textrm{ is a proper algebraic subgroup of $\mathsf T$  defined over $\R$}} \mathsf S(\R)
\]
is a proper subset of $\T(\R)$. Pick an element  $t$ in the complementary set, and the lemma follows.

 \ep

\newcommand{\C}{{\mathbb C}}
\begin{lemma}\label{lemsg1}
Theorem \ref{main2} holds for all semisimple elements $X\in \g$.
\end{lemma}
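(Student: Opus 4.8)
The plan is to deduce this semisimple Lie algebra statement from the already established semisimple group statement (Lemma \ref{lemsg0}), using the topological-generation result (Lemma \ref{x}) to manufacture a single group element that carries the same information as $X$.

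First I would realize $X$ inside a torus defined over $\R$. Since $X$ is semisimple, the identity component $M := Z_\G(X)^\circ$ of its centralizer under the adjoint action is a connected reductive group defined over $\R$, and $X$ lies in the center of $\Lie(M) = \{Y \in \g : [Y,X]=0\}$. Choosing a maximal torus $\T$ of $M$ defined over $\R$, the center of $\Lie(M)$ equals $\Lie(Z(M)^\circ)$, which is contained in $\Lie(\T)$; hence $X \in \Lie(\T)(\R)$. In this way $X$ sits in the Lie algebra of a torus $\T$ defined over $\R$.

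Next, by Lemma \ref{x} I would choose $t \in \T(\R)$ with $\langle t\rangle$ Zariski dense in $\T(\C)$. As $t$ is a semisimple element of $\G(\R)$, Lemma \ref{lemsg0} supplies $g \in \G(\R)$ with $g\,C(t)\,g^{-1} = t^{-1}$. I then form the algebraic automorphism $\psi := \Ad_g \circ C$ of $\G$, which is defined over $\R$ and satisfies $\psi(t) = t^{-1}$.

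The crux is to promote the single identity $\psi(t)=t^{-1}$ to the statement that $\psi$ inverts all of $\T$. Since $\psi$ is a group homomorphism, $\psi(t^n) = t^{-n}$ for every $n \in \mathbb{Z}$, so the two morphisms $s \mapsto \psi(s)$ and $s \mapsto s^{-1}$ from $\T(\C)$ to $\G(\C)$ agree on the Zariski-dense set $\{t^n : n \in \mathbb{Z}\}$ and therefore coincide on all of $\T(\C)$. Differentiating $\psi|_\T = (\,\cdot\,)^{-1}$ at the identity gives $d\psi = -\mathrm{id}$ on $\Lie(\T)$; and since $d\psi = \Ad_g \circ C$ on $\g$, I conclude that $\Ad_g(C(X)) = -X$ for our $X \in \Lie(\T)(\R)$, so $C(X)$ and $-X$ are $\G(\R)$-conjugate. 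The main obstacle is precisely this promotion step: one must check that the inverting element $g$ produced by Lemma \ref{lemsg0} for the single element $t$ is forced, through Zariski density, to invert the whole torus $\T$ and hence to act by $-1$ on the tangent space containing $X$. Securing $X$ inside a torus defined over $\R$ in the first step is the other point needing care.
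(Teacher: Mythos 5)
Your proposal is correct and follows essentially the same route as the paper: place $X$ in the Lie algebra of a torus defined over $\R$ (the paper takes the centralizer of a Cartan subalgebra containing $X$, you take a maximal torus of $Z_{\G}(X)^\circ$, which amounts to the same thing), pick a topological generator $t$ via Lemma \ref{x}, apply Lemma \ref{lemsg0} to $t$, extend the identity $C(\cdot)=\Ad_g((\cdot)^{-1})$ to the whole torus by Zariski density, and differentiate. The extra care you devote to the ``promotion'' step is exactly the density argument the paper invokes in one line.
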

\begin{proof}
Suppose that $X\in \g$ is semisimple. Then there is a Cartan subalgebra $\mathfrak h$ of $\g$ that contains $X$. Write $\mathsf H$ for the centralizer of $\mathfrak h$, which is a Cartan subgroup of $\G$ defined over $\R$. By Lemma \ref{x}, there is an element $t\in \mathsf H(\R)$ such that the cyclic group $\langle t\rangle$ is Zariski dense in $\mathsf H(\mathbb C)$.
By Lemma \ref{lemsg0}, there is an element
$g_0\in \G(\R)$ such that
\[
  C(t)=\Ad_{g_0}(t^{-1}).
\]
The Zariski dense property implies that
\[
  C(h)=\Ad_{g_0}(h^{-1})\qquad \textrm{for all }h\in \mathsf H(\C).
\]
By taking differential, we know that
\[
  C(Y)=\Ad_{g_0}(-Y)\qquad \textrm{for all }Y\in \mathfrak h.
\]
This proves the lemma.

\end{proof}

\section{Unipotent elements}
Write $\mathcal C(\g)$ for the set of $\G(\R)$-orbits in $\g$ under the adjoint action. Write $\mathcal C_{ss}(\g)$ and $\mathcal C_{nil}(\g)$ for the subsets of $\mathcal C(\g)$ consisting of the semisimple orbits and the nilpotent orbits, respectively. Define a map
\[
\begin{array}{rcl}
  \iota:\mathcal C_{nil}(\g)&\rightarrow & \mathcal C_{ss}(\g),\\
  \textrm{the class of $E\in \g$}&\mapsto &\textrm{the class of $E-F\in \g$},
  \end{array}
\]
where $E$ is a nilpotent element in $\g$ such that $(E, [E,F], F)$ forms an $\mathfrak s\mathfrak l_2$-triple. This map is well-defined and injective (\cite[Theorem 9.2.3, Theorem 9.4.6]{cm}).

The involutive automorphism $C:\g\rt \g$ descends to a map
\[
C:\mathcal C(\g)\rt \mathcal C(\g).
\]
By restriction one has the maps $C:\mathcal C_{ss}(\g)\rt \mathcal C_{ss}(\g)$ and  $C:\mathcal C_{nil}(\g)\rt \mathcal C_{nil}(\g)$.

The involutive map $J:\g\rt \g,\ X\mapsto -X$ also descends to a map
\[
J:\mathcal C(\g)\rt \mathcal C(\g).
\]
By restriction one also has the maps $J:\mathcal C_{ss}(\g)\rt \mathcal C_{ss}(\g)$ and  $J:\mathcal C_{nil}(\g)\rt \mathcal C_{nil}(\g)$.

 \begin{lemma}\label{lemsg5}
Theorem \ref{main2} holds for all nilpotent elements $X \in \g$.
\end{lemma}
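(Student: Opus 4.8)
The plan is to prove the lemma by showing that the two induced maps $C,J:\mathcal C_{nil}(\g)\rt \mathcal C_{nil}(\g)$ coincide; this is precisely the statement that $C(X)$ and $-X$ lie in the same $\G(\R)$-orbit for every nilpotent $X\in\g$. Since Lemma \ref{lemsg1} already gives $C=J$ on $\mathcal C_{ss}(\g)$, the idea is to transport that equality along the injective map $\iota:\mathcal C_{nil}(\g)\rt\mathcal C_{ss}(\g)$ introduced above. The key will be the two equivariance identities
\[
\iota\circ C=C\circ\iota\qquad\text{and}\qquad \iota\circ J=J\circ\iota,
\]
after which injectivity of $\iota$ finishes everything.

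To verify the first identity, fix a nonzero nilpotent $E\in\g$ and an $\mathfrak{sl}_2$-triple $(E,H,F)$ with $H=[E,F]$ (such a triple exists over $\R$ by Jacobson--Morozov). Because $C:\g\rt\g$ is a Lie algebra automorphism defined over $\R$, the triple $(C(E),C(H),C(F))$ is again an $\mathfrak{sl}_2$-triple over $\R$ with $C(E)$ nilpotent, so it may be used to compute $\iota$ at the class of $C(E)$, giving
\[
\iota(C[E])=[\,C(E)-C(F)\,]=[\,C(E-F)\,]=C(\iota[E]).
\]
For the second identity one checks directly from the bracket relations that $(-E,\,H,\,-F)$ is an $\mathfrak{sl}_2$-triple, whence
\[
\iota(J[E])=[\,(-E)-(-F)\,]=[\,-(E-F)\,]=J(\iota[E]).
\]
The case $E=0$ is trivial in both identities.

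Combining these with Lemma \ref{lemsg1}, for any nilpotent class $[E]$ one has
\[
\iota(C[E])=C(\iota[E])=J(\iota[E])=\iota(J[E]),
\]
where the middle equality uses that $\iota[E]\in\mathcal C_{ss}(\g)$ and that $C=J$ there. Since $\iota$ is injective, $C[E]=J[E]$, that is, $C(E)$ and $-E$ are $\G(\R)$-conjugate, proving the lemma. I expect the only genuinely delicate points to be bookkeeping ones: ensuring the $\mathfrak{sl}_2$-triples can be chosen over $\R$ and that the images under $C$ and $J$ stay within real orbits, together with the already-cited well-definedness and injectivity of $\iota$, which carry the real content of the argument.
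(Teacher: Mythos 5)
Your proposal is correct and follows exactly the paper's argument: both commute $C$ and $J$ with $\iota$, then use injectivity of $\iota$ together with Lemma \ref{lemsg1} on $\mathcal C_{ss}(\g)$. The only difference is that you spell out the $\mathfrak{sl}_2$-triple computations that the paper dismisses as ``directly verified,'' which is a welcome but inessential elaboration.
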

\bp
It is directly verified that $C$ and $J$ both commutes with $\iota:\mathcal C_{nil}(\g)\rightarrow \mathcal C_{ss}(\g)$. As $\iota$ is injective and $C=J$ on $\mathcal C_{ss}(\g)$ by Lemma \ref{lemsg1}, one has $C=J$ on $\mathcal C_{nil}(\g)$ and the lemma follows.
\ep

 \begin{lemma}\label{lemsg6}
Theorem \ref{main1} holds for all unipotent elements $g \in \G(\R)$.
\end{lemma}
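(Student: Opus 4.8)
The plan is to transport the nilpotent statement of Lemma \ref{lemsg5} to the unipotent setting via the exponential map, in exact parallel to the way the semisimple case was propagated to the nilpotent case in Lemma \ref{lemsg5} itself. Since we are in characteristic zero, the power series $\exp(X)=\sum_{n\ge 0} X^n/n!$ (computed in any faithful algebraic representation) is in fact a polynomial in $X$ whenever $X$ is nilpotent, and it defines an $\R$-rational bijection from the nilpotent cone $\mathcal N\subset \g$ onto the unipotent variety $\mathcal U\subset \G$, with inverse the logarithm $\log(u)=\sum_{n\ge 1}(-1)^{n-1}(u-1)^n/n$. In particular every unipotent $g\in \G(\R)$ can be written uniquely as $g=\exp X$ for a nilpotent $X\in \g$, and this $X$ is automatically defined over $\R$.

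The two properties I will use are the naturality of $\exp$ under algebraic morphisms and its compatibility with inversion. For any algebraic automorphism $\phi$ of $\G$ with differential $d\phi$ one has $\phi(\exp X)=\exp(d\phi(X))$; applying this to the inner automorphism $\Ad_h$ (for $h\in\G(\R)$) and to the involution $C$ gives $\Ad_h(\exp X)=\exp(\Ad_h X)$ and $C(\exp X)=\exp(C X)$, where on the right $C:\g\to\g$ is the induced involution from the Introduction. Moreover $\exp(-X)=(\exp X)^{-1}$, so that $g^{-1}=\exp(-X)$.

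With these in hand the argument is immediate. Given a unipotent $g=\exp X\in\G(\R)$ with $X\in\g$ nilpotent, Lemma \ref{lemsg5} furnishes $h\in\G(\R)$ with $C(X)=\Ad_h(-X)$. Then
\[
C(g)=C(\exp X)=\exp(C X)=\exp\bigl(\Ad_h(-X)\bigr)=\Ad_h\bigl(\exp(-X)\bigr)=\Ad_h(g^{-1}),
\]
so $C(g)$ and $g^{-1}$ are conjugate in $\G(\R)$, which is exactly Theorem \ref{main1} for unipotent $g$.

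I do not expect a genuine obstacle here: the only substantive ingredient is to have the exponential–logarithm correspondence available as an $\R$-rational, conjugation-equivariant bijection between the nilpotent and unipotent loci that intertwines $C$ with its differential, and this is standard over a field of characteristic zero. If one prefers to avoid fixing a faithful representation, the same facts can be obtained from the Springer isomorphism; but the elementary power-series description is entirely adequate over $\R$, so the step that does the real work remains Lemma \ref{lemsg5}.
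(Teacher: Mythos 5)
Your proof is correct and follows essentially the same route as the paper: both write the unipotent element as $g=\exp(X)$ for a unique nilpotent $X\in\g$, invoke Lemma \ref{lemsg5} to get $C(X)=\Ad_h(-X)$, and conclude via the naturality of $\exp$ that $C(g)=\Ad_h(g^{-1})$. The extra remarks you make about $\R$-rationality of the exponential--logarithm correspondence are sound but not needed beyond what the paper's one-line computation already uses.
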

\bp
Suppose that  $g\in G(\mathbb{R})$ is a unipotent element. Then there is a unique nilpotent element $X\in \g$ such that  $\exp(X)=g$. By Lemma \ref{lemsg5},
\[
 C(X)=\Ad_k(-X)\qquad  \textrm{for  some $k\in\G(\R)$}.
\]
Thus
\[
C(g)=C(\exp (X))=\exp(C(X))=\exp(\Ad_k(-X))=\Ad_k(\exp(-X))=\Ad_k(g^{-1}).
\]
This proves the lemma.

\ep

\section{General elements}

Let $s\in \G(\R)$ be a semisimple element. Write $\mathsf Z$ for the centralizer of $s$ in $\G$, which is a reductive linear algebraic group defined over $\R$. Denote by $\mathsf Z^\circ$ the identity connected component of $\mathsf Z$, which is a connected reductive linear algebraic group defined over $\R$. Let $\mathsf T$ be a fundamental Cartan subgroup of $\mathsf Z^\circ$.

\begin{lemma}\label{exg0}
The element $s$ belongs to $\mathsf T(\R)$.
\end{lemma}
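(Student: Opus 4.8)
The plan is to reduce the statement to the classical fact that the center of a connected reductive group is contained in each of its maximal tori. Concretely, I would show that $s$ is a central element of $\mathsf Z^\circ$ and then observe that $\mathsf T$, being a Cartan subgroup of the connected reductive group $\mathsf Z^\circ$, is precisely such a maximal torus (defined over $\R$), so that $s\in\mathsf T$.

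The first and only nonformal point is that $s$ lies in $\mathsf Z^\circ$ at all. Viewing $s$ inside $\G(\C)$, it is semisimple there, so over the algebraically closed field $\C$ it is contained in some maximal torus $\mathsf S$ of $\G$. Since $\mathsf S$ is abelian it commutes with $s$, whence $\mathsf S\subseteq \mathsf Z$; and since $\mathsf S$ is connected, $\mathsf S\subseteq \mathsf Z^\circ$. In particular $s\in\mathsf Z^\circ(\C)$, and because $s$ is a real point and $\mathsf Z^\circ$ is defined over $\R$, we conclude $s\in\mathsf Z^\circ(\R)$.

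By the definition of $\mathsf Z$ as the centralizer of $s$, the element $s$ commutes with every element of $\mathsf Z$, in particular with all of $\mathsf Z^\circ$; together with $s\in\mathsf Z^\circ$ this shows $s$ lies in the center $Z(\mathsf Z^\circ)$. Now $\mathsf T$ is a maximal torus of the connected reductive group $\mathsf Z^\circ$, so its centralizer in $\mathsf Z^\circ$ is $\mathsf T$ itself; as any central element certainly commutes with $\mathsf T$, we get $Z(\mathsf Z^\circ)\subseteq\mathsf T$. Hence $s\in\mathsf T(\C)$, and since $s$ is real, $s\in\mathsf T(\R)$, as desired.

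The step I expect to require the most care is the passage $s\in\mathsf Z^\circ$: it is tempting but insufficient to observe merely that $s$ commutes with $\mathsf T$, since a priori $s$ need not lie in $\mathsf Z^\circ$, and the centralizer of $\mathsf T$ in the possibly disconnected group $\mathsf Z$ can be strictly larger than $\mathsf T$. Exhibiting a connected subgroup of $\mathsf Z$ through $s$, namely a maximal torus of $\G_\C$ containing $s$, is what makes the argument go. I would also remark that the fundamentality of $\mathsf T$ plays no role in this lemma: the conclusion holds for every maximal torus of $\mathsf Z^\circ$ defined over $\R$, precisely because the center is contained in all of them.
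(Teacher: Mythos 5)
Your proposal is correct and follows essentially the same route as the paper: place $s$ in a maximal torus of $\G$ over $\C$ to get $s\in\mathsf Z^\circ(\C)$ (hence $s\in\mathsf Z^\circ(\R)$ since $s$ is a real point), observe that $s$ is then central in $\mathsf Z^\circ$, and conclude via the fact that the center of a connected reductive group lies in every maximal torus. The paper's proof is just a terser version of this, citing Springer for the first step and leaving the centrality-implies-containment step implicit.
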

\begin{proof}
Note that $s\in \mathsf Z(\C)$. As $s$ is semisimple, it lies in some Cartan subgroup of $\mathsf G(\mathbb C)$ (see  \cite[Theorem 6.4..5]{sp}). Thus $s\in \mathsf Z^\circ(\C)$, and hence $s\in \mathsf Z^\circ(\R)$. Moreover, $s$ is in the center of  $\mathsf Z^\circ$, and hence   $s\in \mathsf T(\R)$.
\end{proof}

\begin{lemma}\label{exg}
There exists an element $g\in \G(\R)$ such that
\[
 C(x)= \Ad_g(x^{-1})\qquad \textrm{for all $x\in \mathsf T(\R)$}.
\]
\end{lemma}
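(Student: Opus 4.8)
The plan is to reduce the assertion about the entire group $\mathsf T(\R)$ to a single well-chosen element, exploiting the fact that an algebraic torus carries a cyclic Zariski-dense subgroup. First I would apply Lemma \ref{x} to the torus $\mathsf T$ (defined over $\R$) to produce an element $t\in \mathsf T(\R)$ whose cyclic subgroup $\langle t\rangle$ is Zariski dense in $\mathsf T(\C)$. Since $t$ lies in a torus, it is semisimple as an element of $\G(\R)$, so Lemma \ref{lemsg0} applies and furnishes some $g\in \G(\R)$ with $C(t)=\Ad_g(t^{-1})$. This $g$ is the element I would claim works uniformly for all of $\mathsf T(\R)$, and the rest of the argument is to justify that claim.

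Next I would propagate the identity from $t$ to the whole torus. Consider the two maps $\mathsf T\rightarrow \G$ given by $x\mapsto C(x)$ and $x\mapsto \Ad_g(x^{-1})$. Both are morphisms of algebraic varieties, and both are group homomorphisms: the second because $\Ad_g$, inversion on the commutative group $\mathsf T$, and hence their composite, are all homomorphisms. Since the two maps agree at the generator $t$, they agree on the subgroup $\langle t\rangle$, because $C(t^n)=C(t)^n=(\Ad_g(t^{-1}))^n=\Ad_g(t^{-n})$ for every integer $n$. As $\langle t\rangle$ is Zariski dense in $\mathsf T(\C)$ and two morphisms of varieties (over a field of characteristic zero) agreeing on a dense subset agree everywhere, the two maps coincide on $\mathsf T(\C)$, and in particular on $\mathsf T(\R)$. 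This yields exactly $C(x)=\Ad_g(x^{-1})$ for all $x\in \mathsf T(\R)$, as desired. This is the same density mechanism already used in the proof of Lemma \ref{lemsg1}, now carried out at the group level for the fundamental Cartan subgroup of the centralizer.

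The two points requiring care are the verification that the element $t$ from Lemma \ref{x} is genuinely semisimple in $\G$, so that the semisimple case Lemma \ref{lemsg0} can be invoked, and the fact that a single conjugation $\Ad_g$ simultaneously handles every element of $\mathsf T$. The latter is precisely what Zariski density buys us: applied naively, Lemma \ref{lemsg0} would only control $C$ on each element of $\mathsf T(\R)$ separately, with a possibly different conjugating element for each, whereas the whole force of this lemma is the \emph{uniformity} of $g$. I do not expect a serious obstacle beyond correctly presenting the two maps as algebraic morphisms and citing that equality on a dense set forces global equality; in particular, note that we never need $C$ to preserve $\mathsf T$, only that $C$ and $\Ad_g\circ(\,\cdot\,)^{-1}$ agree as maps into $\G$.
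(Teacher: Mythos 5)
Your proof is correct and takes essentially the same approach as the paper: pick $t$ as in Lemma \ref{x}, invoke Lemma \ref{lemsg0} at $t$ to obtain a single $g$, and propagate the identity to all of $\mathsf T(\R)$ by Zariski density. The paper compresses the density step into the phrase ``this clearly implies the lemma,'' and what you wrote is exactly the correct expansion of that step.
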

\begin{proof}
Let $t\in \mathsf T(\R)$ be as in Lemma \ref{x}. By Lemma \ref{lemsg0}, there exists an element $g\in \G(\R)$ such that
\[
   C(t)= \Ad_g(t^{-1}).
\]
This clearly implies the lemma.
\end{proof}

Let $g\in \G(\R)$ be as in Lemma \ref{exg}.

\begin{lemma}
The automorphism
\[
  \Ad_{g^{-1}}\circ C: \G\rightarrow \G
\]
stabilizes the algebraic subgroup $\mathsf Z^\circ$.
\end{lemma}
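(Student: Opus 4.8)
The plan is to work with the automorphism $\Phi := \Ad_{g^{-1}}\circ C$ of $\G$ and first to pin down its effect on the element $s$. For every $x\in\T(\R)$, Lemma \ref{exg} gives $C(x)=\Ad_g(x^{-1})$, so that
\[
\Phi(x)=\Ad_{g^{-1}}\bigl(\Ad_g(x^{-1})\bigr)=x^{-1}.
\]
Since $s\in\T(\R)$ by Lemma \ref{exg0}, this yields in particular $\Phi(s)=s^{-1}$.

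Next I would transport the defining condition of $\mathsf Z=\Cent_\G(s)$ through the automorphism $\Phi$. Because $\Phi$ is a group homomorphism and $\Phi(s)=s^{-1}$, an element $z$ commutes with $s$ if and only if $\Phi(z)$ commutes with $\Phi(s)=s^{-1}$; and commuting with $s^{-1}$ is the same as commuting with $s$. Carrying this out on $\C$-points, and using that in characteristic zero the centralizer $\mathsf Z$ is smooth and hence determined by its points, shows that $\Phi(\mathsf Z)\subseteq \Cent_\G(s^{-1})=\Cent_\G(s)=\mathsf Z$.

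Finally I would descend to the identity component. The image $\Phi(\mathsf Z^\circ)$ is a connected algebraic subgroup of $\G$ containing the identity $\Phi(e)=e$ and contained in $\mathsf Z$, hence contained in $\mathsf Z^\circ$. Since $\Phi$ is an automorphism of $\G$ it preserves dimension, so $\Phi(\mathsf Z^\circ)$ is a closed connected subgroup of $\mathsf Z^\circ$ of the same dimension; as $\mathsf Z^\circ$ is irreducible, this forces $\Phi(\mathsf Z^\circ)=\mathsf Z^\circ$, which is exactly the assertion that $\Ad_{g^{-1}}\circ C$ stabilizes $\mathsf Z^\circ$.

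There is no serious conceptual obstacle here; the only points demanding a little care are scheme-theoretic rather than structural. Specifically, one must justify the centralizer computation at the level of algebraic groups (via smoothness of centralizers of semisimple elements in characteristic zero, so that an inclusion of $\C$-points is an inclusion of the groups themselves), and one must confirm the final dimension argument, namely that a closed connected subgroup of the irreducible variety $\mathsf Z^\circ$ having full dimension must coincide with $\mathsf Z^\circ$. If one prefers, the identity $\Phi(s)=s^{-1}$ can alternatively be obtained from $\Phi=(\cdot)^{-1}$ on $\T(\R)$ together with the Zariski density of $\T(\R)$ in $\T$ afforded by Lemma \ref{x}.
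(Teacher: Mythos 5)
Your proposal is correct and takes essentially the same approach as the paper: the paper's entire proof consists of deducing $(\Ad_{g^{-1}}\circ C)(s)=s^{-1}$ from Lemmas \ref{exg0} and \ref{exg} and then asserting that this ``clearly implies the lemma.'' Your write-up simply fills in the details behind that ``clearly'' --- transporting the centralizer condition through the automorphism and passing to the identity component --- which is exactly the argument the authors intend.
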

\bp
Lemma \ref{exg0} implies that
\be\label{adg}
   \left(\Ad_{g^{-1}}\circ C\right)(s)=s^{-1}.
   \ee This clearly implies the lemma.
\ep

Let $C': \mathsf Z^\circ\rightarrow \mathsf Z^\circ$ be a fundamental Chevalley involution.

\begin{lemma}\label{k}
There is an element $k\in \mathsf Z^\circ(\R)$ such that
\[
  ( \Ad_{g^{-1}}\circ C)|_{\mathsf Z^\circ}= \Ad_{k^{-1}}\circ C':  \mathsf Z^\circ\rightarrow \mathsf Z^\circ.
\]
\end{lemma}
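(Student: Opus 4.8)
The plan is to show that the restriction $D := (\Ad_{g^{-1}}\circ C)|_{\mathsf{Z}^\circ}$ is itself a fundamental Chevalley involution of $\mathsf{Z}^\circ$, and then to read off the lemma from the uniqueness part of Theorem \ref{g} applied to $\mathsf{Z}^\circ$. By the preceding lemma $D$ is an algebraic automorphism of $\mathsf{Z}^\circ$ defined over $\R$, and it inverts $\mathsf T$: for $x\in \mathsf T(\R)$, Lemma \ref{exg} gives $D(x)=\Ad_{g^{-1}}(\Ad_g(x^{-1}))=x^{-1}$, and this extends to all of $\mathsf T(\C)$ by Zariski density. Since $\mathsf T$ is a fundamental Cartan subgroup of $\mathsf{Z}^\circ$, it then remains only to check that $D$ is involutive.

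The involutivity is the main obstacle, since $\Ad_{g^{-1}}\circ C$ need not be an involution on all of $\mathsf G$ (its square is an inner automorphism that is visibly trivial only on $\mathsf T$). To handle it I would compare $D$ with a genuine involution inverting $\mathsf T$. Choose a fundamental Chevalley involution $D_0$ of $\mathsf{Z}^\circ$ that inverts $\mathsf T$; such a $D_0$ exists because the fundamental Cartan subgroups of $\mathsf{Z}^\circ$ are conjugate under $\mathsf{Z}^\circ(\R)$, so one may replace $C'$ by a suitable $\mathsf{Z}^\circ(\R)$-conjugate. Both $D$ and $D_0$ invert $\mathsf T$, hence $D\circ D_0$ fixes $\mathsf T$ pointwise. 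By the standard fact that an automorphism of a connected reductive group fixing a maximal torus pointwise is conjugation by an element of that torus, there is $t\in \mathsf T(\C)$ with $D=\Ad_t\circ D_0$. Using $D_0(t)=t^{-1}$ one then computes $D^2=\Ad_t\circ D_0\circ \Ad_t\circ D_0=\Ad_{t\,D_0(t)}=\Ad_{t t^{-1}}=\mathrm{id}$, so $D$ is involutive and therefore a fundamental Chevalley involution of $\mathsf{Z}^\circ$.

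Finally, applying the uniqueness part of Theorem \ref{g} to $\mathsf{Z}^\circ$ (a connected reductive group over $\R$), the two fundamental Chevalley involutions $D$ and $C'$ satisfy $D=\Ad_h\circ C'\circ \Ad_{h^{-1}}$ for some $h\in \mathsf{Z}^\circ(\R)$. Rewriting $C'\circ \Ad_{h^{-1}}=\Ad_{C'(h)^{-1}}\circ C'$ yields $D=\Ad_{h\,C'(h)^{-1}}\circ C'$, which is exactly the asserted identity with $k=C'(h)h^{-1}$; this $k$ lies in $\mathsf{Z}^\circ(\R)$ since $h\in \mathsf{Z}^\circ(\R)$ and $C'$ is defined over $\R$. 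Note that this rewriting is what automatically produces a \emph{real} conjugating element $k$, sidestepping the rationality issue one would face trying to realize $D\circ C'$ directly as conjugation by a point of the complex torus; the only nontrivial step is the involutivity argument of the second paragraph, and everything else is formal.
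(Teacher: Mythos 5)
Your proof is correct, but its endgame differs genuinely from the paper's. Both arguments begin identically: conjugate $C'$ by an element of $\mathsf Z^\circ(\R)$ (using conjugacy of fundamental Cartan subgroups) to obtain an involution $D_0$ inverting $\mathsf T$, and both rest on the fact that an automorphism fixing a Cartan subgroup pointwise is conjugation by an element of that torus. The divergence is in how the \emph{real} conjugating element is produced. The paper applies the real form of that fact, citing \cite[Lemma 3.4]{ad}: since $(\Ad_{g^{-1}}\circ C)\circ D_0$ is defined over $\R$ and fixes $\mathsf T(\R)$ pointwise, it equals $\Ad_t$ with $t\in\mathsf T(\R)$, and then $(\Ad_{g^{-1}}\circ C)|_{\mathsf Z^\circ}=\Ad_t\circ D_0=\Ad_{t\,k_0^{-1}C'(k_0)}\circ C'$ finishes the proof in two lines, with no need to know that $D$ is an involution and no appeal to Theorem \ref{g}. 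You instead use only the complex version of the fact (getting $t\in\mathsf T(\C)$), which is exactly enough to prove $D^2=\Ad_{t\,D_0(t)}=\mathrm{id}$ and hence that $D$ is itself a fundamental Chevalley involution of $\mathsf Z^\circ$; the rationality of the conjugator is then delegated to the uniqueness part of Theorem \ref{g} applied to $\mathsf Z^\circ$, followed by the formal rewriting that yields $k=C'(h)h^{-1}\in\mathsf Z^\circ(\R)$. Your route buys a sharper intermediate statement (the restriction of $\Ad_{g^{-1}}\circ C$ is a fundamental Chevalley involution of $\mathsf Z^\circ$, not merely inner-equivalent to one) and avoids the over-$\R$ form of Adams' lemma; the cost is invoking the much deeper uniqueness theorem of Adams (Theorem \ref{g}) where the paper gets by with an elementary lemma. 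All the individual steps in your write-up check out, including the Zariski-density extension from $\mathsf T(\R)$ to $\mathsf T(\C)$ and the identity $C'\circ\Ad_{h^{-1}}=\Ad_{C'(h)^{-1}}\circ C'$.
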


\begin{proof}
By the uniqueness of the fundamental Cartan subgroups, there exists an element $k_0\in \mathsf Z^\circ(\R)$ such that
\[
  C'(\Ad_{k_0}(x))=\Ad_{k_0}(x^{-1})\qquad \textrm{for all }x\in \mathsf T(\R).
\]
Then the automorphism
\[
  ( \Ad_{g^{-1}}\circ C)\circ (\Ad_{k_0^{-1}}\circ C'\circ \Ad_{k_0}): \mathsf Z^\circ\rightarrow \mathsf Z^\circ
\]
fixes all elements of $\mathsf T(\R)$. By \cite[Lemma 3.4]{ad}, this automorphism equals $\Ad_t:  \mathsf Z^\circ\rightarrow \mathsf Z^\circ$ for some $t\in \mathsf T(\R)$. Hence
\begin{eqnarray*}
&&  ( \Ad_{g^{-1}}\circ C)|_{\mathsf Z^\circ}\\
  &=&\Ad_t\circ (\Ad_{k_0^{-1}}\circ C'\circ \Ad_{k_0})\\
  &=&\Ad_t\circ (\Ad_{k_0^{-1}}\circ \Ad_{C'(k_0)}\circ C')\\
  &=&\Ad_{t \cdot k_0^{-1}\cdot C'(k_0)}\circ C'.
\end{eqnarray*}
This proves the lemma.
\end{proof}

\begin{lemma}\label{general}
For every unipotent element $u\in \G(\R)$ that commutes with $s$, there exists an element $g_0\in \G(\R)$ such that
\[
C(su)=\Ad_{g_0}(s^{-1} u^{-1}).
\]
\end{lemma}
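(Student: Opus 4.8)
The plan is to reduce everything to the connected reductive group $\mathsf Z^\circ$, where the unipotent case has already been established. First I would observe that, since $u$ is unipotent and commutes with $s$, it lies in $\mathsf Z(\R)$, and being unipotent it lies in the identity component $\mathsf Z^\circ(\R)$ (unipotent elements sit in the identity component, e.g.\ via the exponential of a nilpotent element of $\Lie \mathsf Z^\circ$). Combined with $s\in \mathsf T(\R)\subseteq \mathsf Z^\circ(\R)$ from Lemma \ref{exg0}, this shows $su\in \mathsf Z^\circ(\R)$, so that the identity of Lemma \ref{k} can legitimately be applied to $su$.

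Next I would rewrite $C=\Ad_g\circ(\Ad_{g^{-1}}\circ C)$ and invoke Lemma \ref{k} on $\mathsf Z^\circ$ to compute
\[
C(su)=\Ad_g\bigl((\Ad_{g^{-1}}\circ C)(su)\bigr)=\Ad_g\bigl((\Ad_{k^{-1}}\circ C')(su)\bigr)=\Ad_{gk^{-1}}\bigl(C'(su)\bigr).
\]
Since $C'$ is a fundamental Chevalley involution of the connected reductive group $\mathsf Z^\circ$ and $u\in \mathsf Z^\circ(\R)$ is unipotent, Lemma \ref{lemsg6}, applied to $(\mathsf Z^\circ, C')$ in place of $(\G, C)$, produces some $k'\in \mathsf Z^\circ(\R)$ with $C'(u)=\Ad_{k'}(u^{-1})$. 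The key conceptual point is that the unipotent case is already available for an \emph{arbitrary} connected reductive group over $\R$, so applying it to the smaller group $\mathsf Z^\circ$ is not circular.

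It then remains to pin down $C'(s)$. Applying the identity of Lemma \ref{k} to the central element $s\in \mathsf Z^\circ$ and using \eqref{adg} gives $\Ad_{k^{-1}}\bigl(C'(s)\bigr)=(\Ad_{g^{-1}}\circ C)(s)=s^{-1}$; since $s$ is central in $\mathsf Z^\circ$ by Lemma \ref{exg0} (so $\Ad_{k^{-1}}$ fixes $s^{-1}$), this forces $C'(s)=s^{-1}$. As $C'$ is a homomorphism and $s,u$ commute,
\[
C'(su)=C'(s)\,C'(u)=s^{-1}\Ad_{k'}(u^{-1})=\Ad_{k'}(s^{-1}u^{-1}),
\]
where the last step again uses that $s^{-1}$ is central in $\mathsf Z^\circ$ and $k'\in \mathsf Z^\circ(\R)$, so that $s^{-1}$ may be moved past $k'$. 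Substituting into the display above yields $C(su)=\Ad_{gk^{-1}k'}(s^{-1}u^{-1})$, and one takes $g_0=gk^{-1}k'\in \G(\R)$.

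The main obstacle is organizational rather than deep: one must check that every relevant element ($s$, $u$, $su$, $k$, $k'$) genuinely lives in $\mathsf Z^\circ(\R)$, so that both Lemma \ref{k} and the $\mathsf Z^\circ$-version of the unipotent case apply, and one must use the centrality of $s$ in $\mathsf Z^\circ$ in two distinct places—first to deduce $C'(s)=s^{-1}$, and then to commute $s^{-1}$ past the conjugating element $k'$. Once these points are verified, the conclusion is a routine composition of adjoint maps.
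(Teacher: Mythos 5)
Your proposal is correct and follows essentially the same route as the paper: both reduce to $\mathsf Z^\circ$ via Lemma \ref{k}, apply the unipotent case (Lemma \ref{lemsg6}) to $(\mathsf Z^\circ, C')$ to handle $u$, and use \eqref{adg} together with the centrality of $s$ in $\mathsf Z^\circ$ to handle $s$, arriving at the same $g_0 = g k^{-1} k'$. The only cosmetic difference is that the paper splits $(\Ad_{g^{-1}}\circ C)(su)$ into its $s$- and $u$-factors directly, whereas you apply Lemma \ref{k} to $su$ as a whole and then separately deduce $C'(s)=s^{-1}$; the two computations are equivalent.
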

\begin{proof}
Note that $u\in \mathsf Z^\circ(\R)$. By Lemma \ref{lemsg6}, there is an element $k'\in \mathsf Z^\circ(\R)$ such that
\[
  C'(u)=\Ad_{k'}(u^{-1}).
  \]
  Let $k\in \mathsf Z^\circ(\R)$ be as in Lemma \ref{k}.
  Then we have that
  \begin{eqnarray*}
&&  ( \Ad_{g^{-1}}\circ C)(su)\\
  &=& s^{-1} \cdot \left(\Ad_{k^{-1}}\circ C'\right)(u) \qquad \textrm{(by \eqref{adg} and Lemma \ref{k})}\\
  &=& s^{-1} \cdot \Ad_{k^{-1}k'}(u^{-1})\\
    &=&   \Ad_{k^{-1}k'}(s^{-1} u^{-1})
  \end{eqnarray*}
This proves the lemma.
\end{proof}

By using the Jordan decomposition (see \cite[Theorem 9.2]{w}), Lemma \ref{general} implies Theorem \ref{main1}. The same method also proves Theorem \ref{main2}.

\section*{Acknowledgements}
The problem of this note was posed by Jeffrey Adams, and the proof was a result of a discussion between  Jeffrey Adams, David Vogan and   Binyong Sun during the conference  ``Branching Laws" that was held in 2012 in the Institute for Mathematical Sciences, National University of Singapore. The authors are grateful to  Adams and Vogan for the  very helpful discussions. They also thank the Institute for Mathematical Sciences at NUS for the hospitality. G. Han was supported by  National Natural Science Foundation of China (No. 12071422) and  Zhejiang Province Science Foundation of China (No. LY14A010018).
B. Sun was supported by National Key $\textrm{R}\,\&\,\textrm{D}$ Program of China (No. 2020YFA0712600) and the National Natural Science Foundation of China (No. 11688101).

\end{document}